\def\emph{\alert}
\newcounter{quotethmcnt}
\newcommand{\mynewop}[1]{
  \expandafter\DeclareMathOperator\csname #1\endcsname{#1}
}
\def\equationautorefname~#1\null{(#1)}
\def\itemautorefname~#1\null{#1}
\newcommand{\mynewthm}[3][]{
  \newaliascnt{#2}{thmnum}
  \newtheorem{#2}[#2]{#3}
  \aliascntresetthe{#2}
  \newtheorem*{#2*}{#3}
  \expandafter\newcommand\csname #2autorefname\endcsname{#3}
  \expandafter\renewcommand\csname the#2\endcsname{\thethmnum}
}
\newtheorem*{clm}{Claim}
\newenvironment{clmprf}{
  \begin{proof}[Proof of claim]
    
  }{\end{proof}}
\let\xxx=\frametitle
\def\frametitle#1{
  \xxx{
    \setbeamercolor*{math text}{use={titlelike,my math text},fg=titlelike.fg!80!my math text.fg}
    #1}
  \setbeamercolor{math text}{use=my math text,fg=my math text.fg}
}
\newcommand{\beamerenv}[3]{
\newenvironment<>{#1}
{
  \setbeamercolor{temp}{fg=structure.fg}
  \setbeamercolor{structure}{fg=#2}
  \setbeamercolor{block body}{use=structure,bg=structure.fg!5!white}
  \begin{#3}
}
{\end{#3}\setbeamercolor{structure}{fg=temp.fg}}}
\newcommand{\mynewthm}[3][green!50!black]{
  \newtheorem*{#2x}{#3}
  \beamerenv{#2}{#1}{#2x}
}
\newcommand{\myiffrench}[2]{#2}
\newcommand{\myiffrench}[2]{\iflanguage{french}{#1}{#2}}
\theoremstyle{plain}
\theoremstyle{definition}
\theoremstyle{remark}
\newcommand{\myenumlabel}[1]{\textnormal{(\roman{#1})}}
\def\book{

\def\@thm##1##2##3{
  \ifhmode\unskip\unskip\par\fi
  \normalfont
  \trivlist
  \let\thmheadnl\relax
  \let\thm@swap\@gobble
  \let\thm@indent\noindent % no indent
  \thm@headfont{\bfseries}% heading font bold
  \thm@notefont{\fontseries\mddefault\upshape}
  \thm@headpunct{.}% add period after heading
  \thm@headsep 5\p@ plus\p@ minus\p@\relax
  \thm@space@setup
  ##1% style overrides
  \@topsep \thm@preskip               % used by thm head
  \@topsepadd \thm@postskip           % used by \@endparenv
  \def\@tempa{##2}\ifx\@empty\@tempa
    \def\@tempa{\@oparg{\@begintheorem{##3}{}}[]}
  \else
    \refstepcounter{##2}
    \def\@tempa{\@oparg{\@begintheorem{##3}{\csname the##2\endcsname}}[]}
  \fi
  \@tempa
}

\renewenvironment{proof}[1][\proofname]{\par
  \pushQED{\qed}
  \normalfont \topsep6\p@\@plus6\p@\relax
  \trivlist
  \item[\hskip\labelsep
        \itshape
    ##1\@addpunct{.}]\ignorespaces
}{
  \popQED\endtrivlist\@endpefalse
}

\newcommand{\excautorefname}{\myiffrench{Exercice}{Exercise}}

}
\newcounter{cycprfcnt}
\newcounter{cycprffirst}
\newcommand{\cycprfpreamble}[1]
{
  \topsep\z@
  \@trivlist
  \def\@itemlabel{\impnext}
  \setcounter{cycprfcnt}{1}
  \setcounter{cycprffirst}{#1}
  \setlength{\itemindent}{\labelsep}
  \def\makelabel##1{\ifnum\value{cycprffirst}=0\setcounter{cycprffirst}{1}\else\indent\fi##1}
  \newcommand{\cpcurr}{\myenumlabel{cycprfcnt}}
  \newcommand{\cpnext}{\addtocounter{cycprfcnt}{1}\cpcurr}
  \newcommand{\cpprev}{\addtocounter{cycprfcnt}{-1}\cpcurr}
  \newcommand{\cpnum}[1]{\setcounter{cycprfcnt}{##1}\cpcurr}
  \newcommand{\cpfirst}{\cpnum{1}}
  \newcommand{\impab}[3][$\Longrightarrow$]{##2 ##1 ##3.}
  \newcommand{\impabc}[4][$\Longrightarrow$]{##2 ##1 ##3 ##1 ##4.}
  \newcommand{\impabd}[5][$\Longrightarrow$]{##2 ##1 ##3 ##1 ##4 ##1 ##5.}
  \newcommand{\impnext}{\impab\cpcurr\cpnext}
  \newcommand{\impnnext}{\impabc\cpcurr\cpnext\cpnext}
  \newcommand{\impnnnext}{\impabcd\cpcurr\cpnext\cpnext\cpnext}
  \newcommand{\impprev}{\impab\cpcurr\cpprev}
  \newcommand{\impfirst}{\impab\cpcurr\cpfirst}
  \newcommand{\impnfirst}{\impabc\cpcurr\cpnext\cpfirst}
  \newcommand{\eqnext}{\impab[$\Longleftrightarrow$]\cpcurr\cpnext}
  \newcommand{\eqnnext}{\impabc[$\Longleftrightarrow$]\cpcurr\cpnext\cpnext}
  \newcommand{\eqnnnext}{\impabcd[$\Longleftrightarrow$]\cpcurr\cpnext\cpnext\cpnext}
  \newcommand{\impnum}[2]{\impab{\cpnum{##1}}{\cpnum{##2}}}
  \newcommand{\impnumnum}[3]{\impab\cpnum{##1}{\cpnum{##2}}{\cpnum{##2}}}
  \newcommand{\eqnum}[2]{\impab[$\Longleftrightarrow$]{\cpnum{##1}}{\cpnum{##2}}}
  \newcommand{\impref}[3][]{\impab{\ref{##1##2}}{\ref{##1##3}}}
}
\newenvironment{cycprf}[1][0]
{\cycprfpreamble{#1}}
{\qedhere\endtrivlist}
\newenvironment{cycprf*}[1][0]
{\cycprfpreamble{#1}}
{\endtrivlist}
\def\indsym#1#2{
  \setbox0=\hbox{$\m@th#1x$}
  \kern\wd0
  \hbox to 0pt{\hss$\m@th#1\mid$\hbox to 0pt{$\m@th#1^{#2}$\hss}\hss}
  \lower.9\ht0\hbox to 0pt{\hss$\m@th#1\smile$\hss}
  \kern\wd0
}
\def\nindsym#1#2{
  \setbox0=\hbox{$\m@th#1x$}
  \kern\wd0
  \hbox to 0pt{\hss$\m@th#1\not$\kern1.4\wd0\hss}
  \hbox to 0pt{\hss$\m@th#1\mid$\hbox to 0pt{$\m@th#1^{#2}$\hss}\hss}
  \lower.9\ht0\hbox to 0pt{\hss$\m@th#1\smile$\hss}
  \kern\wd0
}
\def\dotminussym#1{
  \setbox0=\hbox{$\m@th#1-$}
  \kern.5\wd0
  \hbox to 0pt{\hss\hbox{$\m@th#1-$}\hss}
  \raise.6\ht0\hbox to 0pt{\hss$\m@th#1.$\hss}
  \kern.5\wd0
}
\renewcommand{\emptyset}{\varnothing}
\renewcommand{\setminus}{\smallsetminus}
\DeclareRobustCommand\lcm{\mathop{\operator@font lcm}}
\renewcommand{\Re}{\mathop{\operator@font Re}}
\newcommand{\cM}{\mathcal{M}}
\begin{document}

\title{A topometric Effros theorem}

\author{Itaï \textsc{Ben Yaacov}}

\address{Itaï \textsc{Ben Yaacov} \\
  Université Claude Bernard -- Lyon 1 \\
  Institut Camille Jordan, CNRS UMR 5208 \\
  43 boulevard du 11 novembre 1918 \\
  69622 Villeurbanne Cedex \\
  France}

\urladdr{\url{http://math.univ-lyon1.fr/~begnac/}}

\author{Julien \textsc{Melleray}}
\address{Julien \textsc{Melleray} \\
  Université Claude Bernard -- Lyon 1 \\
  Institut Camille Jordan, CNRS UMR 5208 \\
  43 boulevard du 11 novembre 1918 \\
  69622 Villeurbanne Cedex \\
  France}
\urladdr{\url{http://math.univ-lyon1.fr/~melleray/}}

\thanks{Research supported by ANR project AGRUME (ANR-17-CE40-0026)}
\thanks{The authors wish to thank Gianluca Basso for helpful comments}

% \date{\today}
\keywords{Polish group, topometric space, Effros theorem}
\subjclass[2020]{22A05}

\begin{abstract}
  Given a continuous and isometric action of a Polish group $G$ on an adequate Polish topometric space $(X,\tau,\rho)$ and $x \in X$, we find a necessary and sufficient condition for $\overline{Gx}^\rho$ to be co-meagre; we also obtain a criterion that characterizes when such a point exists.
  This work completes a criterion established in earlier work of the authors.
\end{abstract}

\maketitle

\tableofcontents

\section{Introduction}

Our work in this article is concerned with \emph{Polish topometric spaces}, namely objects of the form $(X,\tau,\rho)$, where $(X,\tau)$ is a Polish topological space and $\rho$ is a lower semi-continuous distance which refines $\tau$. Type spaces in continous logic provide fundamental examples, though our main motivation comes from another direction: given a Polish group $(G,\tau)$, and a left-invariant distance $d$ inducing $\tau$, there is a natural topometric structure obtained by setting $\rho(g,h)= \sup_{k \in G} d(gk,hk)$.
The triplet $(G,\tau,\rho)$ is then a Polish topometric group, i.e.~a Polish space enriched with a topometric structure for which the distance is translation-invariant. These objects played an important part in \cite{BenYaacov-Berenstein-Melleray:TopometricGroups}, and some of their properties were further studied in \cite{BenYaacov-Melleray:Grey}.

An interesting phenomenon was observed in \cite{BenYaacov-Berenstein-Melleray:TopometricGroups}: given an action of a Polish group $G$ on a Polish topometric space $(X,\tau,\rho)$, it might happen that each $G$-orbit is meagre (in the sense of Baire category), yet there exists $x \in X$ such that the $\rho$-closure $\overline{Gx}^\rho$ is co-meagre.
When $\overline{Gx}^\rho$ is co-meagre, we say that $x$ is a \emph{metrically generic element}.
It is of interest, in some concrete cases, to determine precisely what these elements are; see for instance the recent work \cite{Berenstein-Ibarlucia-Henson:ECMPAFG} which interweaves some model theory and ergodic theory.
Under the additional hypothesis of \emph{adequacy} (see \autoref{dfn:AdequateDistance} below; this assumption is satisfied by both kinds of Polish topometric spaces we mentioned above), it was proved in \cite{BenYaacov-Melleray:Grey} that metrically generic elements form a $G_\delta$ subset of $X$, and a characterization of these elements in the spirit of a classical theorem of Effros (e.g.,\  \cite[Theorem~3.2.4]{Gao:InvariantDescriptiveSetTheory}) was provided.

The Effros theorem is a cornerstone in the study of the structure of orbits for Polish group actions, particularly when one needs to determine whether there exist co-meagre orbits.
The topometric version obtained in \cite{BenYaacov-Melleray:Grey} left open the question of whether a weaker condition on $x$ was sufficient to establish that $x$ is metrically generic, as well as the issue of giving a criterion for the existence of metrically generic elements.
The purpose of this note is to address those two points, proving the following (we refer the reader to the beginning of the next section for a reminder of topometric conventions and notations).

\begin{thm*}[see \autoref{thm:NonMeagreOrbitClosure} and \autoref{thm:ExistenceOfGenerics} below]
  Let $(X,\tau,\rho)$ be an adequate Polish topometric space, and $G$ be a Polish group acting continuously and isometrically on $X$. Assume that the action $G \curvearrowright X$ is topologically transitive. Then:
  \begin{itemize}
  \item an element $x \in X$ is metrically generic if, and only if, $(Ux)_{\rho< \varepsilon}$ is somewhere-dense for each open $U \ni 1$ and each $\varepsilon >0$.
  \item There exists a metrically generic element if, and only if, for any neighbourhood $V$ of $1$, any $\varepsilon >0$ and any nonempty open $U \subseteq X$, there exists a nonempty open $U' \subseteq U$ such that for any nonempty open $W_1,W_2 \subseteq U'$ one has $\rho(VW_1,W_2) \le \varepsilon$.
  \end{itemize}
\end{thm*}

\section{Adequate distance and generic elements}

We allow distances to take the value $\infty$, with the convention that $r+ \infty= \infty$ for all $r \in [0,\infty]$.

\begin{conv}
  \label{conv:SpaceTopologyMetric}
  When $X$ is a set endowed with a topology $\tau$ and a metric $\rho$, the vocabulary of general topology refers to $(X,\tau)$, unless explicitly qualified, while the vocabulary of metric spaces refers to $(X,\rho)$.
  Thus, for example, $(X,\tau,\rho)$ is \emph{Polish} if $(X,\tau)$ is, and \emph{complete} if $(X,\rho)$ is.
  Similarly, a \emph{continuous and isometric} action of a group $G$ on $(X,\tau,\rho)$ is a continuous action of $G$ on $(X,\tau)$ such that each map $x \mapsto g x$ is an isometry for $(X,\rho)$.

  We denote the (topological) closure, as usual, by $\overline{A}$, and the metric closure by the (qualified) variant $\overline{A}^\rho$.
\end{conv}

\begin{dfn}
  Given a distance $\rho$ on a set $X$, $U$ a subset of $X$, and $r >0$, we let
  \begin{gather*}
    (U)_{\rho<r} = \bigl\{ x \in X \colon \rho(x,U) < r \bigr\},
    \qquad
    (U)_{\rho \le r} = \bigl\{ x \in X \colon \rho(x,U) \le r \bigr\}.
  \end{gather*}
  We call these sets \emph{thickenings} (\emph{open} and \emph{closed}, respectively) of $U$.
  We mention the distance $\rho$ in the subscript, since several distances on $X$ may be considered at the same time.
\end{dfn}

\begin{dfn}
  \label{dfn:AdequateDistance}
  Let $(X,\tau)$ be a Polish space and $\rho$ a distance on $X$ (possibly incompatible with the topology $\tau$).
  \begin{enumerate}
  \item The distance $\rho$ is \emph{adequate} if for every open set $O \subseteq X$ and $r > 0$, the thickening $(O)_{\rho<r}$ is again open.
  \item An open set $W \subseteq X$ is \emph{$\varepsilon$-small} if for every open non-empty $W_1,W_2 \subseteq W$ we have $\rho(W_1,W_2) < \varepsilon$.
  \item A point $x \in X$ is \emph{$\rho$-generic} if for every $\varepsilon > 0$, the set $(x)_{\rho<\varepsilon}$ is somewhere-dense.
  \end{enumerate}
\end{dfn}

\begin{rmk}
  \label{rmk:AdequateDistanceSmall}
  Assume that $W \subseteq X$ is open and $\varepsilon$-small.
  Then for every non-empty open $O \subseteq W$, the (relative) thickening $W \cap (O)_{\rho<\varepsilon}$ is dense in $W$.

  Indeed, let $x \in W$ and let $U \ni x$ be an open neighbourhood.
  We may assume that $U \subseteq W$.
  Then $\rho(O,U) < \varepsilon$, so $(O)_{\rho<\varepsilon} \cap U \neq \emptyset$.
\end{rmk}

\begin{exm}
  \label{exm:AdequateDistanceTopometric}
  Assume that $(X,\tau,\rho)$ is a topometric space (i.e., $\rho$ refines $\tau$ and is lower semi-continuous).
  Then $\rho$ is adequate if and only if $(X,\tau,\rho)$ is adequate in the sense of \cite{BenYaacov-Melleray:Grey}.

  In the topometric case, we observe that for an open set $W \subseteq X$,
  \begin{itemize}
  \item if $\diam_\rho W < \varepsilon$, then $W$ is $\varepsilon$-small and
  \item if $W$ is $\varepsilon$-small, then $\diam_\rho W \leq \varepsilon$.
  \end{itemize}
  Indeed, one implication is immediate from the definition of $\varepsilon$-smallness.
  For the opposite direction, assume that $x,y \in W$ and $\rho(x,y) > \varepsilon$.
  By lower semi-continuity of $\rho$, there exists open neighbourhoods $x \in W_1$ and $y \in W_2$ such that $\rho(W_1,W_2) > \varepsilon$, and we may freely assume that $W_i \subseteq W$.

  Further, in that case $x$ is $\rho$-generic if and only if it is topometrically isolated, that is, $x$ belongs to the interior of $(x)_{\rho<\varepsilon}$ for all $\varepsilon >0$ (see \cite{BenYaacov:TopometricSpacesAndPerturbations}; for a more general version of this fact, valid also for non-topometric spaces, see \autoref{lem:AdequateDistance} below).
\end{exm}

\begin{dfn}
  Let $G$ be a group. A \emph{norm} on $G$ is a function $\|{\cdot}\| \colon G \to [0,\infty]$ such that
  \begin{itemize}
  \item For all $g \in G$, $\|g\|=0 \Leftrightarrow g=1$.
  \item For all $g \in G$ $\|g\|=\|g^{-1}\|$.
  \item For all $g,h \in G$ $\|gh \| \le \|g\| + \|h\|$.
  \end{itemize}
\end{dfn}

Norms correspond to left-invariant (or right-invariant, depending on the choice of convention) metrics on $G$, via the equality $\|g\|=d(g,1)$ (or $d(g,h) = \|h^{-1}g\|$).
This left-invariant distance always defines a group topology, and we say that the norm is \emph{compatible} with that topology.
Equivalently, a norm is compatible with a group topology if the family of sets
\begin{gather}
  \label{eq:Ur}
  U_r = \bigl\{ g \in G  : \|g\| < r \bigr\}
\end{gather}
is a basis of neighbourhoods for the identity.
The Birkhoff-Kakutani Theorem asserts that a topological group is metrisable if and only if it admits a compatible norm (equivalently, a compatible left-invariant distance).
In what follows, all norms (on topological groups) are implicitly assumed to be compatible.

\begin{exm}
  \label{exm:AdequateDistanceAction}
  Assume that $G$ is a metrisable topological group (e.g., a Polish group) acting continuously on $X$.
  Let $\|{\cdot}\|$ be a compatible norm on $G$, and define
  \begin{gather*}
    \rho(x,y) = \inf \bigl\{ \|g\| : gx = y \bigr\},
  \end{gather*}
  where $\inf \emptyset = \infty$.

  Then $\rho$ is an adequate distance.
  Indeed, $(A)_{\rho<r} = U_r A$, where $U_r$ is as in \autoref{eq:Ur}.
  Moreover, since $\|{\cdot}\|$ is compatible and $G$ acts continuously on $X$, $\rho$ refines the topology of $X$.

\end{exm}

In general, the distance $\rho$ of \autoref{exm:AdequateDistanceAction} need not be lower semi-continuous.
For instance, assume that $G$ is the permutation group of the integers, acting on itself by conjugation and endowed with the norm $\|\sigma \|=\inf\{2^{-n} \colon \forall i \le n \  \sigma(i)=i\}$.
Let $G_0$ denote the subgroup of all permutations which map $0$ to itself, and fix a permutation $\sigma$ with a dense conjugacy class in $G_0$; fix also $\tau \in G_0$ which is not in the conjugacy class of $\sigma$. One can pick $(\sigma_n)$ which are conjugate to $\sigma$ and converge to $\tau$; for all $n$ one has $\rho(\sigma_n,\sigma) \le 1$ (they are conjugate by an element which fixes $0$) yet $\rho(\tau,\sigma)=\infty$.

If $\overline{\rho}$ denotes the greatest lower semi-continuous function below $\rho$, we do not know if there are any reasonable conditions under which $\overline{\rho}$, or something equivalent to it in some reasonable sense, is an adequate distance function.

\begin{exm}
  \label{exm:AdequateDistanceTopometricAction}
  \autoref{exm:AdequateDistanceTopometric} and \autoref{exm:AdequateDistanceAction} can be joined as follows.
  Let $(X,\tau,\rho)$ be an adequate topometric space, and let $G$ be a metrisable topological group acting continuously and isometrically on $X$.
  Let $\|{\cdot}\|$ be a norm on $G$, and define (we use $\vee$ as infix notation for the maximum function):
  \begin{gather*}
    \rho'(x,y) = \inf \bigl\{ \|g\| \vee \rho(gx,y) : g \in G \bigr\}.
  \end{gather*}
  Thus, if $U_r = \{ g \in G : \|g\| < r \}$, then $(A)_{\rho'<r} = U_r (A)_{\rho<r} = (U_r A)_{\rho<r}$.
  Then $\rho'$ is an adequate distance refining the topology.

  If $G$ is trivial, we obtain \autoref{exm:AdequateDistanceTopometric}, and if $\rho$ is the discrete $0/\infty$ distance, \autoref{exm:AdequateDistanceAction}.

  The distance $\rho'$ in this example plays an essential role in our approach here, because it combines the group topology and the metric on $X$: $\rho'(x,y)$ is small iff there exists $g$ close to $1$ (for the topology of $G$) which maps $x$ close to $y$ (according to $\rho$).
  As we saw above, even for discrete $\rho$ this distance $\rho'$ is not lower semi-continuous in general, which is why we allow non-topometric spaces in our setup here.
\end{exm}

We now state two lemmas concerning $\rho$-generic elements for adequate distances.
We do \emph{not} assume any compatibility between $\rho$ and $\tau$ besides adequacy.
These will prove useful in the proof of our topometric version of Effros's theorem.

\begin{lem}
  \label{lem:AdequateDistance}
  Let $\rho$ be an adequate distance on $(X,\tau)$.
  \begin{enumerate}
  \item
    \label{item:AdequateDistanceThickening}
    For every $A \subseteq X$ we have $(\overline{A})_{\rho<\varepsilon} \subseteq \overline{(A)_{\rho<\varepsilon}}$ and $(\overline{A}^\circ)_{\rho<\varepsilon} \subseteq \overline{(A)_{\rho<\varepsilon}}^\circ$.
  \item If $x$ is $\rho$-generic then $x \in \overline{(x)_{\rho<\varepsilon}}^\circ$ for every $\varepsilon > 0$.
    Moreover, for every $\varepsilon > \delta > 0$ there exists an open neighbourhood $W \ni x$ such that $(W)_{\rho<\delta} \subseteq \overline{(x)_{\rho<\varepsilon}}^\circ$.
  \end{enumerate}
\end{lem}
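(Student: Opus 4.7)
Adequacy is the only bridge between $\rho$ and $\tau$ at our disposal, so every passage from topological to metric information must go through open thickenings of open sets.

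For the first inclusion of part~(i), fix $y \in (\overline A)_{\rho<\varepsilon}$, pick $z \in \overline A$ with $\rho(y,z) < \varepsilon$, and let $U$ be any open neighbourhood of $y$.  The plan is to produce $u \in U \cap (A)_{\rho<\varepsilon}$: since $\rho(z,U) \le \rho(z,y) < \varepsilon$, pick $r$ with $\rho(z,U) < r < \varepsilon$; by adequacy $(U)_{\rho<r}$ is an open neighbourhood of $z$, hence meets $A$ at some $a$, and unpacking yields $u \in U$ with $\rho(u,a) < r \le \varepsilon$.  The second inclusion of part~(i) is then automatic: $(\overline{A}^\circ)_{\rho<\varepsilon}$ is open by adequacy and contained in $(\overline A)_{\rho<\varepsilon} \subseteq \overline{(A)_{\rho<\varepsilon}}$, so it lies in the interior $\overline{(A)_{\rho<\varepsilon}}^\circ$.

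For the first assertion of part~(ii), set $A = (x)_{\rho<\varepsilon/2}$ (somewhere-dense by genericity) and $U = \overline{A}^\circ$ (non-empty open).  Since $U$ is a non-empty open subset of $\overline A$, it must meet $A$, so there is $a \in U$ with $\rho(x,a) < \varepsilon/2$, giving $x \in (U)_{\rho<\varepsilon/2}$.  Combining part~(i) with the triangle-inequality inclusion $(A)_{\rho<\varepsilon/2} \subseteq (x)_{\rho<\varepsilon}$ yields
\begin{gather*}
  (U)_{\rho<\varepsilon/2} = (\overline{A}^\circ)_{\rho<\varepsilon/2} \subseteq \overline{(A)_{\rho<\varepsilon/2}}^\circ \subseteq \overline{(x)_{\rho<\varepsilon}}^\circ,
\end{gather*}
which contains $x$.

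For the second assertion, given $\varepsilon > \delta > 0$ choose positive $\mu < \nu$ with $\mu + \nu + \delta \le \varepsilon$ (say $\mu = (\varepsilon-\delta)/4$ and $\nu = (\varepsilon-\delta)/2$).  Set $A = (x)_{\rho<\mu}$, $U = \overline{A}^\circ$, and $W = (U)_{\rho<\nu}$, which is open by adequacy.  The argument of the previous paragraph with $\mu$ in place of $\varepsilon/2$ shows $x \in (U)_{\rho<\mu} \subseteq W$.  The triangle inequality gives $(W)_{\rho<\delta} \subseteq (U)_{\rho<\nu+\delta}$, and part~(i) together with a further triangle-inequality inclusion yields
\begin{gather*}
  (U)_{\rho<\nu+\delta} \subseteq \overline{(A)_{\rho<\nu+\delta}}^\circ \subseteq \overline{(x)_{\rho<\mu+\nu+\delta}}^\circ \subseteq \overline{(x)_{\rho<\varepsilon}}^\circ.
\end{gather*}
The main difficulty throughout is simply the bookkeeping of radii in the absence of any compatibility between $\rho$ and $\tau$; the one step requiring a genuine use of adequacy is the first inclusion of part~(i), and the rest is formal manipulation of thickenings and the triangle inequality.
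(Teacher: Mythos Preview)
Your argument is correct.  The approach is essentially the same as the paper's, with two minor stylistic differences worth noting.  For the first inclusion in part~(i), you give a direct point-by-point argument (thicken an open neighbourhood $U$ of $y$ to hit $A$), whereas the paper argues by complement: set $U = X \setminus \overline{(A)_{\rho<\varepsilon}}$, observe that $(U)_{\rho<\varepsilon}$ is open and disjoint from $A$, hence from $\overline{A}$, so $U$ is disjoint from $(\overline{A})_{\rho<\varepsilon}$.  Both are immediate once adequacy is stated; the paper's version is a line shorter.  For the ``moreover'' clause in part~(ii), the paper simply takes $W = \overline{(x)_{\rho<\varepsilon-\delta}}^\circ$, which contains $x$ by the first assertion of~(ii), and then applies part~(i) once: $(W)_{\rho<\delta} \subseteq \overline{((x)_{\rho<\varepsilon-\delta})_{\rho<\delta}}^\circ \subseteq \overline{(x)_{\rho<\varepsilon}}^\circ$.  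Your choice $W = (U)_{\rho<\nu}$ with the auxiliary radii $\mu,\nu$ works, but it rebuilds from scratch what the first assertion already gives you, and introduces bookkeeping that the paper's choice avoids.
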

\begin{proof}
  For \autoref{item:AdequateDistanceThickening}, let $U = X \setminus \overline{(A)_{\rho<\varepsilon}}$.
  Since $\rho$ is adequate, $(U)_{\rho<\varepsilon}$ is open and disjoint from $A$, hence also from $\overline{A}$, so $U$ is disjoint from $(\overline{A})_{\rho<\varepsilon}$, proving the first inclusion.
  For the second inclusion, observe that $(\overline{A}^\circ)_{\rho<\varepsilon}$ is open and contained in $(\overline{A})_{\rho<\varepsilon}$, hence in $\overline{(A)_{\rho<\varepsilon}}$, so it is contained in the interior.

  Assume now that $x$ is $\rho$-generic.
  For $\varepsilon > 0$, let $V = \overline{(x)_{\rho<\varepsilon}}^\circ$, which is non-empty by assumption on $x$.
  Then $V \cap (x)_{\rho<\varepsilon} \neq \emptyset$, so $x \in (V)_{\rho<\varepsilon}$.
  By \autoref{item:AdequateDistanceThickening}, $(V)_{\rho<\varepsilon} \subseteq \bigl( \overline{(x)_{\rho<\varepsilon}} \bigr)_{\rho<\varepsilon} \subseteq \overline{(x)_{\rho<2\varepsilon}}$.
  Since $(V)_{\rho<\varepsilon}$ is open, $x \in \overline{(x)_{\rho<2\varepsilon}}^\circ$.

  For the moreover part, let $W = \overline{(x)_{\rho<\varepsilon-\delta}}^\circ$.
  Then $x \in W$, and by \autoref{item:AdequateDistanceThickening}
  \begin{gather*}
    (W)_{\rho<\delta} \subseteq \overline{\left((x)_{\rho<\varepsilon-\delta}\right)_{\rho < \delta}}^\circ \subseteq \overline{(x)_{\rho<\varepsilon}}^\circ.
    \qedhere
  \end{gather*}
\end{proof}

\begin{lem}
  \label{lem:Rosendal}
  Let $X$ be a Polish space and $\rho$ an adequate distance.
  Then the following are equivalent:
  \begin{enumerate}
  \item For every $\varepsilon > 0$, the union of $\varepsilon$-small open subsets of $X$ is dense.
  \item The set of $\rho$-generic $x \in X$ is co-meagre.
  \item The set of $\rho$-generic $x \in X$ is dense.
  \end{enumerate}
\end{lem}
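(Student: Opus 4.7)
The plan is to close the cycle $(2) \Rightarrow (3) \Rightarrow (1) \Rightarrow (2)$. The first implication is immediate since $X$ is Polish, hence Baire, so every co-meagre subset of $X$ is dense.

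For $(3) \Rightarrow (1)$, fix a non-empty open $V \subseteq X$ and $\varepsilon > 0$. Using (3), pick a $\rho$-generic $x \in V$. By \autoref{lem:AdequateDistance}, $x \in \overline{(x)_{\rho<\varepsilon/2}}^\circ$, so $W := V \cap \overline{(x)_{\rho<\varepsilon/2}}^\circ$ is a non-empty open subset of $V$. For non-empty open $W_1,W_2 \subseteq W$, each $W_i$ is a non-empty open subset of $\overline{(x)_{\rho<\varepsilon/2}}$ and so meets $(x)_{\rho<\varepsilon/2}$; picking $y_i \in W_i$ with $\rho(y_i,x) < \varepsilon/2$, the triangle inequality gives $\rho(W_1,W_2) < \varepsilon$, proving that $W$ is $\varepsilon$-small.

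For $(1) \Rightarrow (2)$, set $A_\varepsilon := \{x \in X : (x)_{\rho<\varepsilon} \text{ is somewhere-dense}\}$. Since the set of $\rho$-generic points equals $\bigcap_n A_{1/n}$, it suffices to show that each $A_\varepsilon$ is co-meagre. By (1), the open set $E_\varepsilon$ formed by the union of all $\varepsilon$-small open subsets of $X$ is dense; by second countability, write $E_\varepsilon = \bigcup_n W_n$ with each $W_n$ open and $\varepsilon$-small. Fix $n$ and let $(O_k)_k$ enumerate the non-empty basic open subsets of $X$ contained in $W_n$ (they form a basis for the subspace topology of $W_n$). By adequacy, each $(O_k)_{\rho<\varepsilon}$ is open in $X$, and by \autoref{rmk:AdequateDistanceSmall}, each $W_n \cap (O_k)_{\rho<\varepsilon}$ is dense in $W_n$. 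Baire applied in the Polish space $W_n$ yields that $G_n := \bigcap_k \bigl( W_n \cap (O_k)_{\rho<\varepsilon} \bigr)$ is a dense $G_\delta$ subset of $W_n$. For $x \in G_n$, $(x)_{\rho<\varepsilon}$ meets every $O_k$, hence is dense in $W_n$, so $W_n \subseteq \overline{(x)_{\rho<\varepsilon}}^\circ$ and $x \in A_\varepsilon$. Thus $W_n \setminus A_\varepsilon$ is meagre in $W_n$, and since $W_n$ is open in $X$, also meagre in $X$. Taking the union over $n$ and adding the nowhere-dense complement $X \setminus E_\varepsilon$ shows that $X \setminus A_\varepsilon$ is meagre.

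The main obstacle is the $(1) \Rightarrow (2)$ direction; the key insight is that within each $\varepsilon$-small open $W_n$, membership in $A_\varepsilon$ is captured by a countable intersection of thickenings that are dense in $W_n$ (by \autoref{rmk:AdequateDistanceSmall}) and open (by adequacy), so Baire category delivers a co-meagre set of $\rho$-generics.
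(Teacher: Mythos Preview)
Your proof is correct and follows essentially the same route as the paper's: the $(3)\Rightarrow(1)$ step uses \autoref{lem:AdequateDistance} to place each generic point inside a small open set, and the $(1)\Rightarrow(2)$ step covers $X$ (up to a nowhere-dense set) by countably many $\varepsilon$-small open $W_n$, then uses \autoref{rmk:AdequateDistanceSmall} and Baire inside each $W_n$ to produce a co-meagre set of points whose $\varepsilon$-thickening is dense in $W_n$. The only cosmetic differences are that you handle one $\varepsilon$ at a time (the paper runs a sequence $\delta_m\to 0$ simultaneously) and that in $(3)\Rightarrow(1)$ you halve $\varepsilon$ to land exactly on an $\varepsilon$-small set rather than a $2\varepsilon$-small one.
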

\begin{proof}
  \begin{cycprf}
  \item This argument is inspired by a similar one in unpublished lecture notes of Christian Rosendal.

    Let $\{O_n\}$ be a basis of (non-empty) open sets for $X$ and let $\delta_m \rightarrow 0$.
    For each $n,m$, let $W_{n,m} \subseteq O_n$ be open, non-empty and $\delta_m$-small.
    Then $W_m = \bigcup_n \, W_{n,m}$ is open and dense in $X$ for all $m$.

    For any $k$ such that $O_k \subseteq W_{n,m}$, the thickening $W_{n,m} \cap (O_k)_{\rho<\delta_m}$ is dense in $W_{n,m}$ (see \autoref{rmk:AdequateDistanceSmall}).
    It follows that the set
    \begin{gather*}
      D_{n,m} = \bigcap_{O_k \subseteq W_{n,m}} \, (O_k)_{\rho<\delta_m}
    \end{gather*}
    is co-meagre in $W_{n,m}$.
    Thus $D_m = \bigcup_n \, D_{n,m}$ is co-meagre in $W_m$ and therefore in $X$.
    Therefore $D = \bigcap_m \, D_m$ is co-meagre in $X$.

    Assume now that $x \in D$.
    For each $m$ we have $x \in D_{n,m}$ for some $n$, and if $O_k \subseteq W_{n,m}$, then $(x)_{\rho<\delta_m} \cap O_k \neq \emptyset$.
    It follows that $W_{n,m} \subseteq \overline{(x)_{\rho<\delta_m}}$.
    Thus $\overline{(x)_{\rho<\delta_m}}$ is somewhere-dense for every $m$, i.e., $x$ is generic.
  \item Immediate.
  \item[\impfirst]
    Fix $\varepsilon > 0$.
    If $x \in X$ is $\rho$-generic, then $x \in \overline{(x)_{\rho<\varepsilon}}^\circ$ by \autoref{lem:AdequateDistance}.
    In addition, if $W_1,W_2 \subseteq \overline{(x)_{\rho<\varepsilon}}^\circ$ are open and non-empty, then both intersect $(x)_{\rho<\varepsilon}$, so $\rho(W_1,W_2) < 2\varepsilon$.
    It follows that the union of all $2\varepsilon$-small open sets contains the generic points, and is therefore dense.
  \end{cycprf}
\end{proof}

\begin{rmk}
  \label{rmk:Rosendal}
  When $(X,\tau,\rho)$ is a topometric space, the argument is much more straightforward, namely, it suffices to take the intersection over $m$ of all unions of all $\delta_m$-small open sets.
\end{rmk}

\section{Topometric group action}
\label{sec:TopometricGroupAction}

Recall that a continuous action $G \curvearrowright X$ is \emph{topologically transitive} if $G V \cap W \neq \emptyset$ for any two non-empty open $V,W \subseteq X$.
Equivalently, if $G V$ is dense for every non-empty open set $V \subseteq X$.

\begin{lem}
  Assume that $(X,\tau,\rho)$ is a Polish topometric space, and $G \curvearrowright X$ continuously and isometrically.
  \label{lem:TopologicallyTransitiveCriterion}
  The following are equivalent:
  \begin{enumerate}
  \item The action $G \curvearrowright X$ is topologically transitive.
  \item The set of $x \in X$ whose orbit is dense, is co-meagre.
  \item There exists a point $x \in X$ such that $(G x)_{\rho<\varepsilon}$ is dense for all $\varepsilon > 0$.
  \item For every two non-empty open subsets $V,W \subseteq X$: $\rho(G V, W) = 0$.
  \item For every non-empty open subset $V \subseteq X$ and every $\varepsilon > 0$: $(GV)_{\rho < \varepsilon}$ is dense in $X$.
  \end{enumerate}
\end{lem}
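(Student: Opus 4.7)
The plan is to establish the cycle of implications $(1) \Rightarrow (2) \Rightarrow (3) \Rightarrow (4) \Rightarrow (5) \Rightarrow (1)$. Most steps are short; the interesting one is closing the cycle.

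For $(1) \Leftrightarrow (2)$, I fix a countable basis $\{O_n\}$ of non-empty $\tau$-open sets and observe that $\{x \in X : Gx \text{ is } \tau\text{-dense}\} = \bigcap_n GO_n$. Each $GO_n$ is $\tau$-open (as a union of translates of an open set), and its $\tau$-density is precisely topological transitivity applied to $V = O_n$, so Baire category translates between the two formulations. For $(2) \Rightarrow (3)$, any $x$ with $\tau$-dense orbit satisfies $(Gx)_{\rho<\varepsilon} \supseteq Gx$, which is already $\tau$-dense.

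For $(3) \Rightarrow (4)$, given non-empty open $V, W$ and $\varepsilon > 0$, the $\tau$-density of $(Gx)_{\rho<\varepsilon}$ furnishes $y_V \in V$, $g_V \in G$ with $\rho(y_V, g_V x) < \varepsilon$ and $y_W \in W$, $g_W \in G$ with $\rho(y_W, g_W x) < \varepsilon$. The point $z = g_W g_V^{-1} y_V \in GV$ then satisfies, by isometry of the action,
\[
  \rho(z, y_W) \le \rho(g_W g_V^{-1} y_V, g_W x) + \rho(g_W x, y_W) = \rho(y_V, g_V x) + \rho(g_W x, y_W) < 2\varepsilon,
\]
so $\rho(GV, W) = 0$. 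The implication $(4) \Rightarrow (5)$ is immediate, since $\rho(GV, W) = 0 < \varepsilon$ forces $(GV)_{\rho<\varepsilon}$ to meet every non-empty open $W$.

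The main obstacle is $(5) \Rightarrow (1)$, which has to convert a statement about $\rho$-thickenings being $\tau$-dense into genuine topological transitivity. Fix a non-empty open $V$; I show $GV$ is $\tau$-dense. For each $k \ge 1$, adequacy ensures $(V)_{\rho<1/k}$ is $\tau$-open, and the isometry of the action gives $(GV)_{\rho<1/k} = G(V)_{\rho<1/k}$, which is therefore a $\tau$-open set, and is $\tau$-dense by~(5). Baire category then makes
\[
  \bigcap_{k \ge 1} (GV)_{\rho<1/k} = \bigl\{x \in X : \rho(x, GV) = 0\bigr\} = \overline{GV}^\rho
\]
$\tau$-co-meagre, and in particular $\tau$-dense. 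Since $\rho$ refines $\tau$, we have $\overline{GV}^\rho \subseteq \overline{GV}^\tau$, so $\overline{GV}^\tau$ is $\tau$-dense, and being $\tau$-closed it equals $X$; thus $GV$ is $\tau$-dense, giving topological transitivity.

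The delicate point is this last step: on its own, (5) only gives $\rho$-closeness to $GV$, not $\tau$-closeness to $GV$. What bridges the gap is the simultaneous use of adequacy (to make the thickenings $\tau$-open so Baire applies), Baire category (to intersect countably many such $\tau$-open dense sets), and the refinement of $\tau$ by $\rho$ (to upgrade $\rho$-closure to $\tau$-closure). I don't expect any of the easier steps to present difficulties.
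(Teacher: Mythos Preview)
Your proof is correct and follows the same cycle $(1)\Rightarrow(2)\Rightarrow(3)\Rightarrow(4)\Rightarrow(5)\Rightarrow(1)$ as the paper, with essentially the same argument at each step, including the Baire-category closing of the cycle. One minor remark: in $(5)\Rightarrow(1)$ you invoke adequacy to make $(GV)_{\rho<1/k}$ $\tau$-open, while the lemma is stated only for a Polish topometric space; the paper's own proof asserts this openness without comment, so there is no discrepancy between your argument and theirs.
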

\begin{proof}
  \begin{cycprf}
  \item This is classical: choose a countable basis $\{O_n\}$ and let $Y = \bigcap G O_n$.
    Then each $G O_n$ is open and dense, so $Y$ is co-meagre, and the orbit of every $x \in Y$ intersects every $O_n$.
  \item Immediate.
  \item If such an $x$ exists, then for every $\varepsilon > 0$ we may find $g,h \in G$ such that $\rho(gx, V) + \rho(hx,W) < 2\varepsilon$, so $\rho(G V, W) < 2\varepsilon$.
  \item Since $\rho(G V,W) < \varepsilon$ for every open non-empty $W$.
  \item[\impfirst]
    Let $V$ be a non-empty open subset of $X$.
    For any $\varepsilon >0$ the set $(G V)_{\rho < \varepsilon}$ is open and dense, so the intersection $\overline{G V}^\rho$ is co-meagre, and in particular, dense.
    Since $\rho$ refines the topology, $\overline{G V} = \overline{\overline{G V}^\rho} = X$.
  \end{cycprf}
\end{proof}

From this point onwards, we assume that we have the data of \autoref{exm:AdequateDistanceTopometricAction}, namely, $(X,\tau,\rho)$ is an adequate topometric Polish space and $G$ a Polish group acting on $X$ continuously and isometrically.
We fix a norm on $G$, which we assume to be bounded by $1$, and define $\rho'$ as in \autoref{exm:AdequateDistanceTopometricAction}:
\begin{gather*}
  \rho'(x,y) = \inf \bigl\{ \|g\| \vee \rho(gx,y) : g \in G \bigr\}.
\end{gather*}
As pointed out above, $\rho'$ is an adequate distance refining the topology, but $(X,\tau,\rho')$ is not necessarily a topometric space.
Note also that $\rho'$ is not, in general, $G$-invariant.

Observe that $x \in X$ is $\rho'$-generic if, and only if, $ \overline{ (U   x)_{\rho < \varepsilon}}^\circ \neq \emptyset$ for every open $U$ and $\varepsilon >0$, if and only if $x \in \overline{ (U   x)_{\rho < \varepsilon}}^\circ$ for every open $U$ and $\varepsilon >0$.

\begin{lem}
  \label{lem:TopologicallyTransitiveCriterionGeneric}
  Assume that $x \in X$ is $\rho'$-generic.
  Then $G \curvearrowright X$ is topologically transitive if and only if $(G  x)_{\rho<\varepsilon}$ is dense for all $\varepsilon > 0$.
\end{lem}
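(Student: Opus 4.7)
The plan is to handle the two directions separately. The converse direction (``$(Gx)_{\rho<\varepsilon}$ dense for all $\varepsilon>0$ implies topological transitivity'') is essentially free: this specific $x$ witnesses condition~(iii) of \autoref{lem:TopologicallyTransitiveCriterion}, which is one of the equivalent forms of topological transitivity. Note that this direction does not actually use the hypothesis that $x$ is $\rho'$-generic.

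For the forward direction, I fix $\varepsilon > 0$ and aim to show that $(Gx)_{\rho<\varepsilon}$ is dense. The main idea is to produce, out of $\rho'$-genericity, a non-empty open set sitting inside $\overline{(Gx)_{\rho<\varepsilon}}$, and then use topological transitivity to spread it over all of $X$. Concretely, the observation recorded just before the lemma statement reformulates $\rho'$-genericity of $x$ as $x\in\overline{(Ux)_{\rho<\delta}}^\circ$ for every open neighbourhood $U$ of $1$ and every $\delta>0$. Applying this with $U=U_\varepsilon$ and $\delta=\varepsilon$ gives a non-empty open set
\begin{gather*}
  W := \overline{(U_\varepsilon x)_{\rho<\varepsilon}}^\circ.
\end{gather*}

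Because $G$ acts on $X$ by homeomorphisms that are also $\rho$-isometries, each $g \in G$ preserves topological closure, interior, and $\rho$-thickenings; hence
\begin{gather*}
  gW = \overline{(gU_\varepsilon x)_{\rho<\varepsilon}}^\circ \subseteq \overline{(Gx)_{\rho<\varepsilon}},
\end{gather*}
using $gU_\varepsilon x \subseteq Gx$. Taking the union over $g$ yields $GW \subseteq \overline{(Gx)_{\rho<\varepsilon}}$. Topological transitivity (via \autoref{lem:TopologicallyTransitiveCriterion}) ensures that $GW$ is dense in $X$, and thus so is $(Gx)_{\rho<\varepsilon}$, as required. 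I do not anticipate any real obstacle here; the only (modest) point to keep straight is that each $g\in G$ commutes with \emph{both} the $\rho$-thickening operation (because it is a $\rho$-isometry) \emph{and} the interior/closure operators (because it is a homeomorphism), after which the argument is just bookkeeping.
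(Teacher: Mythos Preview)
Your proof is correct and is essentially the same as the paper's. The only cosmetic difference is that the paper writes $V=\overline{(x)_{\rho'<\varepsilon}}^\circ$ and then unpacks $G\cdot(x)_{\rho'<\varepsilon}=(Gx)_{\rho<\varepsilon}$ at the end, whereas you expand $(x)_{\rho'<\varepsilon}=(U_\varepsilon x)_{\rho<\varepsilon}$ from the outset; your $W$ and the paper's $V$ are literally the same set, and both arguments conclude via $GV$ (resp.\ $GW$) being dense by topological transitivity.
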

\begin{proof}
  One implication is immediate from \autoref{lem:TopologicallyTransitiveCriterion}.
  For the other, assume that $G\curvearrowright X$ is topologically transitive.
  Fix $\varepsilon >0$.

  By \autoref{lem:AdequateDistance}, $x \in V = \overline{(x)_{\rho'<\varepsilon}}^\circ$.
  Then $GV$ is dense, and
  \begin{gather*}
    GV \subseteq G \cdot \overline{(x)_{\rho'<\varepsilon}} \subseteq \overline{G \cdot (x)_{\rho'<\varepsilon}} = \overline{(G x)_{\rho<\varepsilon}}.
  \end{gather*}
  Therefore, $(G x)_{\rho < \varepsilon}$ is dense as well.
\end{proof}

\begin{lem}
  \label{lem:TopologicallyTransitiveGeneric}
  Assume that $G \curvearrowright X$ is topologically transitive.
  Then the following are equivalent:
  \begin{enumerate}
  \item For every $\varepsilon > 0$, the union of $(\rho',\varepsilon)$-small open sets in $X$ is dense.
  \item The set of $\rho'$-generic points is co-meagre.
  \item A $\rho'$-generic point exists.
  \end{enumerate}
\end{lem}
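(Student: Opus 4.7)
The plan is to deduce this lemma from \autoref{lem:Rosendal}, applied to the adequate distance $\rho'$ on the Polish space $X$. With this in hand, the equivalence (i) $\Leftrightarrow$ (ii) is exactly \autoref{lem:Rosendal}(1) $\Leftrightarrow$ (2), and the implication (ii) $\Rightarrow$ (iii) is immediate since $X$ is a non-empty Polish space so any co-meagre subset is non-empty. Hence the only nontrivial direction is (iii) $\Rightarrow$ (ii), and for this I will in fact prove that the set of $\rho'$-generic points is dense, then invoke \autoref{lem:Rosendal}(3) $\Rightarrow$ (2) to upgrade density to co-meagreness.

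To establish density of the generic set given the existence of a $\rho'$-generic $x_0$, I shall prove the stronger claim that every $y \in \overline{Gx_0}^\rho$ is itself $\rho'$-generic. Granting the claim, topological transitivity together with \autoref{lem:TopologicallyTransitiveCriterionGeneric} guarantees that $(Gx_0)_{\rho<1/n}$ is $\tau$-open and dense in $X$ for every $n$, so that $\overline{Gx_0}^\rho = \bigcap_n (Gx_0)_{\rho<1/n}$ is a dense $G_\delta$ set; the generic set, containing it, is then dense as well.

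To prove the claim I would fix $y \in \overline{Gx_0}^\rho$ and $\delta > 0$, choose $g \in G$ with $\rho(gx_0,y) < \delta/2$, and work with the identity $(y)_{\rho'<\delta} = (U_\delta y)_{\rho<\delta}$ from \autoref{exm:AdequateDistanceTopometricAction}. A routine use of the triangle inequality and of the $\rho$-isometricity of the $G$-action yields $(U_\delta g x_0)_{\rho<\delta/2} \subseteq (y)_{\rho'<\delta}$. Continuity of conjugation at $1$ then supplies some $\delta' > 0$ with $g U_{\delta'} \subseteq U_\delta g$; setting $\delta'' = \min(\delta',\delta/2)$, a short chase produces
\[
g \cdot (x_0)_{\rho'<\delta''} \;\subseteq\; g \cdot (U_{\delta'} x_0)_{\rho<\delta/2} \;\subseteq\; (U_\delta g x_0)_{\rho<\delta/2} \;\subseteq\; (y)_{\rho'<\delta}.
\]
The leftmost set is somewhere-dense by $\rho'$-genericity of $x_0$ and the fact that $g$ acts as a $\tau$-homeomorphism, whence so is $(y)_{\rho'<\delta}$, as required.

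The main obstacle is exactly this transport of $\rho'$-genericity from $x_0$ to a nearby $y$: because $\rho'$ is not $G$-invariant, one cannot simply translate neighbourhoods along the orbit. Continuity of conjugation at $1$ replaces the missing invariance, at the cost of a shrinkage $\delta' \leq \delta$ that depends on $g$, and it is this $\delta$–$\delta'$ bookkeeping that has to be threaded carefully through the computation.
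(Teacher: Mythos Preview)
Your reductions (i)$\Leftrightarrow$(ii) and (ii)$\Rightarrow$(iii) via \autoref{lem:Rosendal} are fine, and your ``claim'' that every $y\in\overline{Gx_0}^\rho$ is $\rho'$-generic is correctly argued (it is in fact exactly what the paper proves afterwards as \autoref{lem:GenericsClosed}). The gap is in the sentence that follows the claim: you assert that $(Gx_0)_{\rho<1/n}$ is $\tau$-open and then apply Baire to obtain density of $\overline{Gx_0}^\rho=\bigcap_n(Gx_0)_{\rho<1/n}$. \autoref{lem:TopologicallyTransitiveCriterionGeneric} only yields \emph{density} of $(Gx_0)_{\rho<1/n}$, not openness, and adequacy of $\rho$ guarantees that thickenings of \emph{open} sets are open --- but the orbit $Gx_0$ is not open, and $\rho$-balls around single points need not be $\tau$-open in an adequate topometric space (e.g., take $\rho$ discrete). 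So the countable intersection of the dense sets $(Gx_0)_{\rho<1/n}$ may well fail to be dense, and your route to ``the generic set is dense'' breaks down here.

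The fix is short, and in fact your claim already supplies the missing ingredient: once you know every $gx_0$ is $\rho'$-generic, \autoref{lem:AdequateDistance} gives $(gx_0)_{\rho<\varepsilon}\subseteq(gx_0)_{\rho'<\varepsilon}\subseteq\overline{(gx_0)_{\rho'<2\varepsilon}}^\circ$, and the latter is a $(\rho',4\varepsilon)$-small open set. Hence $(Gx_0)_{\rho<\varepsilon}$ is contained in the union of $(\rho',4\varepsilon)$-small open sets, and since $(Gx_0)_{\rho<\varepsilon}$ is dense (again \autoref{lem:TopologicallyTransitiveCriterionGeneric}), so is that union. This is exactly the paper's direct route (iii)$\Rightarrow$(i); the paper's write-up applies the displayed chain of inclusions only at $x$ and then translates by $G$, tacitly using that each $gx$ is also generic --- the point your claim makes explicit.
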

% J: conflit bête entre ton cycprf (énumérations chiffres romains) et ton enumerate (chiffres arabes). Utiliser enumerate ou enumitem fait passer les numéros en italique dans ton environnement de lemme (au moins, dans la version de begnac.sty que j'ai sur mon ordi, qui date un peu) et je trouve ça moche; j'ai eu la flemme de creuser d'où l'énumération à la main. Bref.
% I: Je ne comprends par de quoi tu parles, enumerate me donne des chiffres romains, toujours!
\begin{proof}
  \begin{cycprf}
  \item This is \autoref{lem:Rosendal}.
  \item Immediate.
  \item[\impfirst]
    Let $x$ be $\rho'$-generic, and let $\varepsilon > 0$.
    On the one hand, the open set $\overline{(x)_{\rho'<2\varepsilon}}^\circ$ is $(\rho',4\varepsilon)$-small, as in the proof of the last implication of \autoref{lem:Rosendal}.
    On the other hand, by \autoref{lem:AdequateDistance}, $x \in \overline{(x)_{\rho'<\varepsilon}}^\circ$ and
    \begin{gather*}
      (x)_{\rho<\varepsilon} \subseteq (x)_{\rho'<\varepsilon} \subseteq \bigl( \overline{(x)_{\rho'<\varepsilon}}^\circ \bigr)_{\rho'<\varepsilon} \subseteq \overline{(x)_{\rho'<2\varepsilon}}^\circ.
    \end{gather*}
    Consequently, $(G  x)_{\rho<\varepsilon}=G(x)_{\rho<\varepsilon}$ is contained in a union of $(\rho',4\varepsilon)$-small open sets.

    Since $x$ is $\rho$'-generic and $G \curvearrowright X$ is topologically transitive, $(G  x)_{\rho<\varepsilon}$ is dense (by \autoref{lem:TopologicallyTransitiveCriterionGeneric}), completing the proof.
  \end{cycprf}
\end{proof}

\begin{ntn}
  Given $x \in X$, we let $[x] = \overline{G  x}^\rho$.
\end{ntn}
Observe that the sets $[x]$ form a partition of $X$ into $\rho$-closed sets.

\begin{lem}
  \label{lem:GenericsClosed}
  Assume that $x$ is $\rho'$-generic and $y \in [x]$.
  Then $y$ is $\rho'$-generic.
\end{lem}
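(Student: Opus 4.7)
The argument naturally splits into two observations. \textbf{First}, I will show that $\rho'$-genericity is preserved under the $G$-action: if $x$ is $\rho'$-generic then so is $gx$ for any $g \in G$. \textbf{Second}, I will use the fact that $y$ can be $\rho$-approximated arbitrarily well by points of $Gx$ to transfer $\rho'$-genericity from a suitable $gx$ to $y$, via the triangle inequality combined with isometry of the action.

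For the first observation, fix $g \in G$, an open neighbourhood $V \ni 1_G$, and $\varepsilon > 0$. Since conjugation by $g$ is a homeomorphism of $G$, the set $U = g^{-1} V g$ is again an open neighbourhood of $1_G$. For each $v \in V$ one has $v \cdot gx = g \cdot (g^{-1} v g) x$, and since $z \mapsto gz$ is a $\rho$-isometry, a short check gives
\begin{gather*}
  (V \cdot gx)_{\rho < \varepsilon} \;=\; g \cdot (U x)_{\rho < \varepsilon}.
\end{gather*}
By $\rho'$-genericity of $x$, the set $(Ux)_{\rho<\varepsilon}$ is somewhere-dense. Since $z \mapsto gz$ is also a $\tau$-homeomorphism of $X$, it preserves somewhere-density, hence $(V \cdot gx)_{\rho<\varepsilon}$ is somewhere-dense. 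As $V$ and $\varepsilon$ were arbitrary, $gx$ is $\rho'$-generic.

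For the second observation, fix an open $V \ni 1_G$ and $\varepsilon > 0$; we show $(Vy)_{\rho<\varepsilon}$ is somewhere-dense. Because $y \in \overline{Gx}^\rho$, we can pick $g \in G$ with $\rho(gx, y) < \varepsilon/2$. By the first step, $gx$ is $\rho'$-generic, so $(V \cdot gx)_{\rho<\varepsilon/2}$ is somewhere-dense. If $z$ belongs to this set, then $\rho(z, v \cdot gx) < \varepsilon/2$ for some $v \in V$, and by isometry $\rho(v \cdot gx, vy) = \rho(gx, y) < \varepsilon/2$, so $\rho(z, vy) < \varepsilon$ and $z \in (Vy)_{\rho<\varepsilon}$. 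Thus $(V \cdot gx)_{\rho<\varepsilon/2} \subseteq (Vy)_{\rho<\varepsilon}$, and the latter is somewhere-dense as desired.

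No step here is really the main obstacle; the only thing one has to be slightly careful about is that $\rho'$-genericity involves open neighbourhoods of $1_G$, not arbitrary open sets of $G$, which is why the conjugation trick $U = g^{-1}Vg$ appears in the first step. Once the $G$-invariance of $\rho'$-genericity is in hand, the approximation argument in the second step is a direct application of the triangle inequality together with the isometry of the action.
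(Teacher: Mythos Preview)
Your proof is correct and follows the same two-step structure as the paper's: first establish $G$-invariance of $\rho'$-genericity via conjugation, then transfer it along a $\rho$-approximation using isometry and the triangle inequality. The only cosmetic difference is that you work throughout with the characterization ``$(Ux)_{\rho<\varepsilon}$ somewhere-dense for every open $U\ni 1$ and $\varepsilon>0$'', which lets you obtain the clean equality $(V\cdot gx)_{\rho<\varepsilon}=g\cdot(g^{-1}Vg\,x)_{\rho<\varepsilon}$; the paper instead argues directly with $\rho'$-balls and needs a continuity-of-conjugation step to get the inclusion $g(x)_{\rho'<\delta}\subseteq(gx)_{\rho'<\varepsilon}$.
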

\begin{proof}
  First, fix $g \in G$ and $\varepsilon >0$, then find $\delta$ such that $0< \delta \le \varepsilon$ and for all $h \in G$ one has $\|h\| \le \delta \Rightarrow \|ghg^{-1}\| \le \varepsilon$.
  Given $y \in g(x)_{\rho'< \delta}$ there exists $h$ such that $\|h\|< \delta$ and $\rho(g^{-1}y,hx)< \delta$, i.e., $\rho(y,ghg^{-1}gx) < \delta$.
  It follows that $g  (x)_{\rho' < \delta} \subseteq (g  x)_{\rho' < \varepsilon}$, hence also $g  \overline{(x)_{\rho'<\delta}}^\circ \subseteq \overline{(g  x)_{\rho' < \varepsilon}}^\circ$.
  Hence $g  x$ is $\rho'$-generic.

  Now, let $y \in [x]$ and $\varepsilon > 0$, and find $g \in G$ such that $\rho(g  x,y) < \varepsilon$.
  Then $(g  x)_{\rho' < \varepsilon} \subseteq (y)_{\rho' < 2 \varepsilon}$, so $\overline{(y)_{\rho' < 2\varepsilon}}^\circ \ne \emptyset$, and $y$ is generic as well.
\end{proof}

We are ready to prove our main lemma, describing the structure of $\rho'$-generic elements for a topologically transitive action.

\begin{lem}
  \label{lem:GenericBackAndForth}
  \begin{enumerate}
  \item Fix $\varepsilon >0$. Assume that $x,y \in X$ are both $\rho'$-generic, and that $y \in \overline{(G  x)_{\rho<\varepsilon}}$.
    Then $y \in (G  x)_{\rho\leq\varepsilon}$.
  \item Assume that $G \curvearrowright X$ is topologically transitive.
    Then the set of $\rho'$-generic elements in $X$ is either empty, or of the form $[x] = \overline{G  x}^\rho$ (where $x$ is any $\rho'$-generic element).
  \end{enumerate}
\end{lem}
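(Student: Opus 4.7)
The plan is to prove Part~(1) by a back-and-forth-style construction producing a Cauchy sequence $(g_n) \subseteq G$ converging to some $g \in G$ with $\rho(g_n x, y) < \varepsilon + \delta_n$ for a fixed summable sequence $\delta_n \downarrow 0$. Passing to the limit, continuity of the action yields $g_n x \to g x$ in $\tau$, and lower semi-continuity of $\rho$ gives $\rho(gx, y) \le \liminf \rho(g_n x, y) \le \varepsilon$, whence $y \in (Gx)_{\rho \le \varepsilon}$. A key preliminary observation is that, by lower semi-continuity of $\rho$, each $(gx)_{\rho<\varepsilon}$ is $\tau$-open, and hence so is $(Gx)_{\rho<\varepsilon} = \bigcup_g (gx)_{\rho<\varepsilon}$.

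For the base step, I would use $\rho'$-genericity of $y$: by \autoref{lem:AdequateDistance}, $\overline{(y)_{\rho'<\delta_0}}^\circ$ is a $\tau$-open neighbourhood of $y$, hence intersects the $\tau$-open set $(Gx)_{\rho<\varepsilon}$. Since $(y)_{\rho'<\delta_0}$ is $\tau$-dense in $\overline{(y)_{\rho'<\delta_0}}^\circ$, there is some $z \in (y)_{\rho'<\delta_0} \cap (Gx)_{\rho<\varepsilon}$. Unpacking the memberships yields $h_0 \in G$ with $\|h_0\| < \delta_0$ and $\rho(h_0 y, z) < \delta_0$, and some $g_0^\star \in G$ with $\rho(g_0^\star x, z) < \varepsilon$; then $g_0 := h_0^{-1} g_0^\star$ satisfies $\rho(g_0 x, y) < \varepsilon + \delta_0$ by the triangle inequality and isometry of the action.

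The inductive step is the crux. Having constructed $g_n$ with $\rho(g_n x, y) < \varepsilon + \delta_n$, note that $g_n x$ is itself $\rho'$-generic by \autoref{lem:GenericsClosed}, and $y$ still lies in $\overline{(Gx)_{\rho<\varepsilon}}^\tau = \overline{(G g_n x)_{\rho<\varepsilon}}^\tau$. One re-runs the base-case argument in the vicinity of $g_n x$, now also exploiting $\rho'$-genericity of $g_n x$ (which provides a rich supply of points $\rho$-close to small-orbit translates of $g_n x$) to find the required group correction $\ell_n$ in a small neighbourhood of the identity, and choosing $\ell_n$ further so that, using continuity of conjugation by $g_n$, the resulting $g_{n+1} := \ell_n g_n$ is close to $g_n$ in both a left- and a right-invariant sense, keeping $(g_n)$ Cauchy in a complete two-sided-invariant metric on $G$. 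The main technical obstacle is precisely this step: simultaneously shrinking the error from $\varepsilon + \delta_n$ down to $\varepsilon + \delta_{n+1}$ while controlling the correction in $G$ tightly enough to ensure convergence.

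For Part~(2), if there is no $\rho'$-generic element the set is vacuously empty. Otherwise, fix a $\rho'$-generic $x$; by \autoref{lem:GenericsClosed}, every $y \in [x] = \overline{Gx}^\rho$ is $\rho'$-generic. Conversely, if $y$ is $\rho'$-generic, then by \autoref{lem:TopologicallyTransitiveCriterionGeneric} applied to the $\rho'$-generic $x$ together with the hypothesis of topological transitivity, $(Gx)_{\rho<\varepsilon}$ is $\tau$-dense in $X$ for every $\varepsilon > 0$, so $y \in \overline{(Gx)_{\rho<\varepsilon}}^\tau$ for every $\varepsilon > 0$; applying Part~(1) gives $\rho(y, Gx) \le \varepsilon$ for every $\varepsilon > 0$, hence $\rho(y, Gx) = 0$ and $y \in \overline{Gx}^\rho = [x]$.
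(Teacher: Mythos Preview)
Your Part~(2) is correct and matches the paper's argument exactly.

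For Part~(1), there are two problems. First, the ``key preliminary observation'' is false: lower semi-continuity of $\rho$ gives that $(gx)_{\rho\le\varepsilon}$ is $\tau$-closed, not that $(gx)_{\rho<\varepsilon}$ is $\tau$-open. (Take $X=[0,1]$ with the usual topology and the discrete $0/1$ metric: this is adequate and topometric, and $(x)_{\rho<1}=\{x\}$.) Your base step uses this openness to pass from ``$\overline{(y)_{\rho'<\delta_0}}^\circ$ meets $(Gx)_{\rho<\varepsilon}$'' to ``$(y)_{\rho'<\delta_0}$ meets $(Gx)_{\rho<\varepsilon}$'', which does not follow. The paper avoids this by arguing in the other direction: from $W_{n+1}(y)\cap (g_nO_nx)_{\rho<\varepsilon_n}\neq\emptyset$ it concludes $g_nO_nx\cap (W_{n+1}(y))_{\rho<\varepsilon_n}\neq\emptyset$, and then uses the inclusion $(W_{n+1}(y))_{\rho<\varepsilon_n}\subseteq F_{n+1}(y)$ (coming from \autoref{lem:AdequateDistance}).

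Second, and more seriously, the inductive step is not a proof. Re-running the base case with $g_nx$ in place of $x$ gives you some $g'\in G$ with $\rho(g'g_nx,y)<\varepsilon+\delta_{n+1}$, but with no control whatsoever on $\|g'\|$; invoking $\rho'$-genericity of $g_nx$ tells you $\overline{(Vg_nx)_{\rho<\delta}}^\circ$ is nonempty for small $V,\delta$, but gives no reason for $y$ to lie in (the closure of) such a set. The missing idea is that one must \emph{alternate}: the paper maintains, at even steps, $g_n^{-1}y\in\overline{(O_nx)_{\rho<\varepsilon_n}}$, and at odd steps, $g_nx\in\overline{(O_ny)_{\rho<\varepsilon_n}}$, where the $O_n$ are shrinking symmetric neighbourhoods of $1$ chosen (depending on $g_n$) to control conjugation. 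Each passage uses genericity of the \emph{target} point (via the open set $W_{n+1}(\cdot)$) to produce the next $g_{n+1}$ inside $g_nO_n$ or $O_ng_n$. It is this alternation that simultaneously shrinks the error and keeps the corrections small enough for Cauchyness in the two-sided uniformity; your one-sided scheme does not achieve this.
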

\begin{proof}
  For the first item, it is enough to show that $y \in (G  x)_{\rho \leq \varepsilon'}$ for any given $\varepsilon' > \varepsilon$.
  Let $\delta_n = 2^{-n}(\varepsilon' - \varepsilon)$ and $\varepsilon_n = \varepsilon' - \delta_n$.

  We are going to construct a (convergent) sequence $(g_n)$ of elements of $G$, whose limit will send $x$ close to $y$.

  Before going into the details, recall the notation \autoref{eq:Ur}: $U_r = \bigl\{ h \in G : \|h\| < r \bigr\}$.
  Let $O_0 = G$, and once $g_n$ has been chosen, let
  \begin{gather*}
    O_{n+1} = U_{2^{-n}} \cap g_n U_{2^{-n}} g_n^{-1} \cap g_n^{-1} U_{2^{-n}} g_n,
  \end{gather*}
  observing that this is a symmetric neighbourhood of $1$.
  Then, if $z \in X$ is $\rho'$-generic, we define
  \begin{gather*}
    F_n(z) = \overline{(O_n  z)_{\rho<\varepsilon_n}},
    \qquad
    W_n(z) = \overline{(O_n  z)_{\rho<\delta_n}}^\circ.
  \end{gather*}
  By \autoref{lem:AdequateDistance}, both of them contain $z$, and
  \begin{gather}
    \label{eq:BackAndForthInclusion}
    \bigl( W_{n+1}(z) \bigr)_{\rho<\varepsilon_n}
    = \bigl( \overline{(O_{n+1}  z)_{\rho<\delta_{n+1}}}^\circ \bigr)_{\rho<\varepsilon_n}
    \subseteq \overline{(O_{n+1}  z)_{\rho<\delta_{n+1} + \varepsilon_n}}
    = F_{n+1}(z).
  \end{gather}

  Now, to the actual construction, which will ensure that:
  \begin{itemize}
  \item For even $n$ we have $g_n^{-1}  y \in F_n(x)$ and $g_{n+1} \in g_n O_n$.
  \item For odd $n$ we have $g_n  x \in F_n(y)$ and $g_{n+1} \in O_n g_n$ (equivalently, $g_{n+1}^{-1} \in g_n^{-1} O_n$).
  \end{itemize}
  We may start with $g_0=1$, observing that, indeed, $g_0^{-1} y = y \in \overline{(G  x)_{\rho<\varepsilon}} = F_0(x)$.

  Assume that $g_n$ has been chosen, say for some even $n$.
  Then
  $$y \in g_n  F_n(x) \cap W_{n+1}(y)= \overline{\bigl(g_nO_n  x\bigr)_{\rho < \varepsilon_n}} \cap W_{n+1}(y) .$$
  Since $W_{n+1}(y)$ is open, it intersects $(g_n O_nx)_{\rho < \varepsilon_n}$.
  Together with \autoref{eq:BackAndForthInclusion}:
  \begin{gather*}
    g_n O_n  x \cap F_{n+1}(y) \supseteq g_n O_n x \cap \bigl( W_{n+1}(y) \bigr)_{\rho<\varepsilon_n} \neq \emptyset.
  \end{gather*}
  We may then choose $g_{n+1} \in g_n O_n$ such that $g_{n+1}  x \in F_{n+1}(y)$.
  The odd step is similar.

  We claim that the sequence $(g_n)$ is Cauchy for the upper uniformity on $G$, hence convergent since $G$ is Polish.
  Indeed, let us consider an even $m \geq 2$, say $m = n + 2$.
  Then $g_m \in g_{n+1} O_{n+1}$ and $g_{m+1} \in O_{n+2} g_m$, so
  \begin{gather*}
    g_{m+1}g_m^{-1} \in O_{n+2} \subseteq U_{2^{-n-1}},
    \\
    g_m^{-1}g_{m+1} \in g_m^{-1} O_{n+2} g_m \subseteq  O_{n+1} \cdot (g_{n+1}^{-1} O_{n+2} g_{n+1}) \cdot O_{n+1} \subseteq U_{2^{-n}} \cdot U_{2^{-n-1}} \cdot U_{2^{-n}}.
  \end{gather*}
  Therefore $\|g_{m+1}g_m^{-1}\| < 2^{-m+1}$ and $\|g_m^{-1}g_{m+1}\| < 5 \cdot 2^{-m+1}$, which is good enough.
  The odd case is similar.

  Let $g = \lim g_n \in G$.
  For all $n$ we have $y \in \overline{g_{2n}\bigl(O_{2n}  x\bigr)_{\rho<\varepsilon_{2n}}}$, so we may choose $z_n \in X$ and $u_n \in O_{2n}$ such that $g_{2n}z_n \rightarrow y$ and
  \begin{gather*}
    \rho(g_{2n} z_n, g_{2n} u_n x) = \rho(z_n,u_n  x) < \varepsilon_{2n} \le \varepsilon'.
  \end{gather*}
  By continuity of the group action, $g_{2n}u_n x \rightarrow g x$.
  Since $\rho$ is, in addition, lower semi-continuous, we obtain $\rho(y,gx) \le \varepsilon'$, as promised.

  For the second item, assume that $G \curvearrowright X$ is topologically transitive and that $x$ is $\rho'$-generic.
  We know by \autoref{lem:GenericsClosed} that any $y \in [x]$ is $\rho'$-generic as well.
  Conversely, since the action is topologically transitive we have $\overline{(G  x)_{\rho<\varepsilon}} = X$ for all $\varepsilon > 0$, by \autoref{lem:TopologicallyTransitiveCriterionGeneric}.
  By the first item, every $\rho'$-generic element belongs to $\bigcap_{\varepsilon > 0} \, (G  x)_{\rho \le \varepsilon} = [x]$.
\end{proof}

We obtain the following topometric version of the Effros theorem.

\begin{thm}
  \label{thm:NonMeagreOrbitClosure}
  Let $(X,\tau,\rho)$ be an adequate Polish topometric space, $G$ a Polish group acting on $X$ continuously, isometrically and topologically transitively.
  Then, for $x \in X$, the following are equivalent:
  \begin{enumerate}
  \item The orbit closure $[x] = \overline{G  x}^\rho$ is co-meagre.
  \item The set $(G  x)_{\rho<\varepsilon}$ is non-meagre for all $\varepsilon > 0$.
  \item The point $x$ is $\rho'$-generic, namely $(U x)_{\rho < \varepsilon}$ is somewhere-dense for every $\varepsilon >0$ and open $U \ni 1$.
    Equivalently, $x \in \overline{(U  x)_{\rho<\varepsilon}}^\circ$ for every $\varepsilon > 0$ and open $U \ni 1$.
  \end{enumerate}
\end{thm}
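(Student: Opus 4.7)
My plan is the cyclic chain $(i) \Rightarrow (ii) \Rightarrow (iii) \Rightarrow (i)$. The two outer implications are short. For $(i) \Rightarrow (ii)$, every $y \in [x] = \overline{Gx}^\rho$ satisfies $\rho(y, Gx) = 0$, so $[x] \subseteq (Gx)_{\rho<\varepsilon}$ for every $\varepsilon > 0$; co-meagreness of $[x]$ thus forces $(Gx)_{\rho<\varepsilon}$ itself to be co-meagre, and \emph{a fortiori} non-meagre. For $(iii) \Rightarrow (i)$ I simply invoke the structural lemmas already at hand: if $x$ is $\rho'$-generic, then \autoref{lem:TopologicallyTransitiveGeneric} gives that the set of $\rho'$-generic points is co-meagre, while the second clause of \autoref{lem:GenericBackAndForth} identifies that set with $[x]$, so $[x]$ is co-meagre.

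The substantive step is $(ii) \Rightarrow (iii)$, which I plan to establish by a standard Lindelöf/Baire-category argument. Since $G$ acts by isometries, for any $B \subseteq G$ one has $(Bx)_{\rho<\varepsilon} = B \cdot (x)_{\rho<\varepsilon}$; writing $A = (x)_{\rho<\varepsilon}$, assumption $(ii)$ reads ``$GA$ is non-meagre''. Given any open $U \ni 1$, second-countability of $G$ lets me cover $G = \bigcup_n g_n U$ by countably many left translates, whence $GA = \bigcup_n g_n(UA)$; non-meagreness of $GA$ then forces some $g_n(UA)$ to be non-meagre, and as each $g_n$ acts as a homeomorphism on $X$, $UA = (Ux)_{\rho<\varepsilon}$ itself is non-meagre. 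In the Polish (hence Baire) space $X$ every non-meagre set is somewhere-dense---a nowhere-dense set being trivially meagre---which gives the desired somewhere-density of $(Ux)_{\rho<\varepsilon}$.

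Finally, the equivalence inside $(iii)$ between ``$(Ux)_{\rho<\varepsilon}$ is somewhere-dense'' and ``$x \in \overline{(Ux)_{\rho<\varepsilon}}^\circ$'' is a routine application of \autoref{lem:AdequateDistance} to the adequate distance $\rho'$, using the identity $(x)_{\rho'<\delta} = (U_\delta x)_{\rho<\delta}$ and the fact that the $U_\delta$ form a neighbourhood basis at $1$. The only truly non-formal ingredient in the entire chain is the Lindelöf covering trick in $(ii) \Rightarrow (iii)$, which I expect to be the main (if modest) obstacle.
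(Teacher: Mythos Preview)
Your proposal is correct and matches the paper's own proof almost verbatim: the same cyclic chain $(i)\Rightarrow(ii)\Rightarrow(iii)\Rightarrow(i)$, the same containment $[x]\subseteq (Gx)_{\rho<\varepsilon}$ for $(i)\Rightarrow(ii)$, the same Lindel\"of covering $G=\bigcup_n g_nU$ for $(ii)\Rightarrow(iii)$, and the same appeal to \autoref{lem:TopologicallyTransitiveGeneric} and \autoref{lem:GenericBackAndForth} for $(iii)\Rightarrow(i)$. Your write-up is simply more explicit about the intermediate identities (e.g.\ $(Bx)_{\rho<\varepsilon}=B\cdot(x)_{\rho<\varepsilon}$ and the homeomorphism step), which the paper compresses into a single sentence.
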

\begin{proof}
  \begin{cycprf}
  \item
    Immediate since $\overline{G  x}^\rho$ is contained in $(G  x)_{\rho<\varepsilon}$ for all $\varepsilon >0$.
  \item
    For any open $U \ni 1$ we can express $G$ as $\bigcup_n g_n U$.
    Since $(G  x)_{\rho<\varepsilon}$ is not meagre, neither is $(U  x)_{\rho<\varepsilon}$, so it is somewhere-dense.
    In other words, $x$ is $\rho'$-generic.
    The second characterisation is by \autoref{lem:AdequateDistance}.
  \item[\impfirst]
    Let $X_0 \subseteq X$ consist of all $\rho'$-generic elements, and assume that $x \in X_0$.
    By \autoref{lem:TopologicallyTransitiveGeneric}, the set $X_0$ is co-meagre.
    By \autoref{lem:GenericBackAndForth}, $X_0 = [x]$.
  \end{cycprf}
\end{proof}

\begin{rmk}
  \label{rmk:Grey}
  If we assume that $G  x$ is dense in $X$, the above conditions are also equivalent, by \cite[Theorem~5.2]{BenYaacov-Melleray:Grey}, to the following conditions:
  \begin{enumerate}
    \setcounter{enumi}{3}
  \item $\overline{G  x}^\rho$ is $G_\delta$.
  \item For any open subset $U$ of $G$ and any $\varepsilon >0$, $\bigl(U  x\bigr)_{\rho < \varepsilon}$ is open in $\overline{G  x}^\rho$.
  \item For any open subset $U$ of $G$ and any $\varepsilon >0$, $\bigl(U  x \bigr)_{\rho < \varepsilon} \cap G x$ is open in $G  x$.
  \end{enumerate}
\end{rmk}

The criterion obtained in this paper provides a condition on $x$ that is sufficient for $\overline{G  x}^\rho$ to be co-meagre, but is seemingly weaker than the conditions from \cite{BenYaacov-Melleray:Grey}.
This approach also enables us to state a criterion for the existence of such points.
The analogous statement for Polish group actions on Polish spaces is due to Rosendal and part of our arguments here are adaptations of Rosendal's proof to the topometric setting.

\begin{thm}
  \label{thm:ExistenceOfGenerics}
  Let $(X,\tau,\rho)$ be an adequate Polish topometric space, $G$ a Polish group acting on $X$ topologically transitively.
  Then the following are equivalent:
  \begin{enumerate}
  \item
    There exists $x \in X$ such that $[x] = \overline{G  x}^\rho$ is co-meagre.
  \item
    For any $\varepsilon >0$ the union of all $(\rho',\varepsilon)$-small open sets is dense.
  \item
    For any open $V \ni 1$, any $\varepsilon >0$ and any non-empty open $U \subseteq X$, there exists a non-empty open $U' \subseteq U$ such that for any non-empty open $W_1,W_2 \subseteq U'$ one has $\rho(V  W_1,W_2) \le \varepsilon$.
  \end{enumerate}
\end{thm}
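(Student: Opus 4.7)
\emph{Proof proposal.} The plan is to obtain $(1) \Leftrightarrow (2)$ essentially for free from the material already built up, and then to prove the substantive equivalence $(2) \Leftrightarrow (3)$ by directly unwinding the definition of $(\rho', \varepsilon)$-smallness in terms of the norm on $G$ and the metric $\rho$ on $X$. The only real delicacy is converting the non-strict inequality $\rho(VW_1,W_2)\le \varepsilon$ appearing in $(3)$ into the strict inequality $\rho'(W_1,W_2)<\varepsilon$ that defines $(\rho',\varepsilon)$-smallness; this will force a halving trick in one direction.

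For $(1) \Leftrightarrow (2)$: by \autoref{thm:NonMeagreOrbitClosure}, the existence of $x$ with $[x]$ co-meagre is equivalent to the existence of a $\rho'$-generic point, and by \autoref{lem:TopologicallyTransitiveGeneric} (which uses topological transitivity), the latter is equivalent to the union of $(\rho', \varepsilon)$-small open sets being dense for every $\varepsilon > 0$. Both citations are used as black boxes.

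For $(2) \Rightarrow (3)$: given $V \ni 1$, $\varepsilon > 0$, and non-empty open $U \subseteq X$, I would choose $\delta > 0$ with $\delta < \varepsilon$ and $U_\delta \subseteq V$, then apply $(2)$ at the scale $\delta$ to find a non-empty $(\rho', \delta)$-small open $U' \subseteq U$. For any non-empty open $W_1, W_2 \subseteq U'$, the inequality $\rho'(W_1, W_2) < \delta$ unfolds, by definition of $\rho'$, to produce $g \in G$, $x \in W_1$, $y \in W_2$ with $\|g\| < \delta$ and $\rho(gx, y) < \delta$. Since $g \in U_\delta \subseteq V$, the point $gx$ lies in $VW_1$, so $\rho(VW_1, W_2) \le \rho(gx, y) < \delta < \varepsilon$.

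For $(3) \Rightarrow (2)$: given $\varepsilon > 0$ and any non-empty open $U \subseteq X$, I would apply $(3)$ with $V := U_{\varepsilon/2}$ and with parameter $\varepsilon/2$ in place of $\varepsilon$, obtaining a non-empty open $U' \subseteq U$ such that $\rho(U_{\varepsilon/2} W_1, W_2) \le \varepsilon/2$ for all non-empty open $W_1, W_2 \subseteq U'$. Fixing any $\eta \in (0, \varepsilon/2)$, this (still non-strict) inequality yields $g \in U_{\varepsilon/2}$, $x \in W_1$, $y \in W_2$ with $\rho(gx,y) < \varepsilon/2 + \eta < \varepsilon$, while simultaneously $\|g\| < \varepsilon/2 < \varepsilon$; hence $\rho'(x,y) < \varepsilon$, so $\rho'(W_1, W_2) < \varepsilon$ and $U'$ is $(\rho', \varepsilon)$-small. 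Since $U$ was arbitrary, the union of $(\rho', \varepsilon)$-small open sets is dense. The main obstacle, as anticipated, is precisely this last step: it is the halving (passing to $\varepsilon/2$) that absorbs the gap between the $\le$ of $(3)$ and the $<$ required by $(\rho', \varepsilon)$-smallness.
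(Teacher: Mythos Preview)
Your proposal is correct and follows the same approach as the paper: the equivalence $(1)\Leftrightarrow(2)$ is obtained from \autoref{thm:NonMeagreOrbitClosure} and \autoref{lem:TopologicallyTransitiveGeneric} exactly as the paper does, and $(2)\Leftrightarrow(3)$ comes from unwinding the identity $(W_1)_{\rho'<\varepsilon}=(U_\varepsilon W_1)_{\rho<\varepsilon}$. The paper's treatment of $(2)\Leftrightarrow(3)$ is terser and leaves the strict/non-strict conversion implicit in the universal quantification over $\varepsilon$ and $V$, whereas you spell out the halving explicitly; this is a matter of presentation, not of strategy.
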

\begin{proof}
  \begin{cycprf}
  \item[\eqnext]
    By \autoref{thm:NonMeagreOrbitClosure} there exists $x \in X$ such that $[x]$ is co-meagre iff there exists a $\rho'$-generic $x$, and then \autoref{lem:TopologicallyTransitiveGeneric} states that the two conditions are equivalent.
  \item[\eqnext]
    The union of all $(\rho',\varepsilon)$-small open sets is dense if and only if every non-empty open set $U$ contains a $(\rho',\varepsilon)$-small one $W$, namely such that, if $W_1,W_2 \subseteq W$ are non-empty and open, then $(W_1)_{\rho'<\varepsilon} \cap W_2 \neq \emptyset$.
    Since $(W_1)_{\rho'<\varepsilon} = (U_\varepsilon W_1)_{\rho<\varepsilon}$, where $U_\varepsilon= \bigl\{g \in G \colon \|g\|< \varepsilon\bigr\}$ as in \autoref{eq:Ur}, we obtain the alternate formulation.
  \end{cycprf}
\end{proof}

Let us give an example of an application of our topometric version of the Effros theorem.
Recall that a \emph{Polish topometric group} $(G,\tau,\rho)$ is a topometric space such that $(G,\tau)$ is a topological group, and the distance $\rho$ is invariant under both left and right translation (see \cite{BenYaacov-Berenstein-Melleray:TopometricGroups}).
Given a Polish topometric group $(G,\tau,\rho)$ and $n < \omega$, we turn $G^n$ into an adequate Polish topometric space by endowing it with the product topology and the metric $\rho(x, y)= \max_{i<n} \rho(x_i,y_i)$.
The group $G$ acts on each $G^n$ by diagonal conjugation.
We say that $x \in G^n$ is \emph{metrically generic} if $\overline{G \cdot x}^\rho$ is co-meagre in $G^n$ (here we use $\cdot$ to denote the action of $G$ by diagonal conjugation, to avoid confusion with the product of elements of $G$), or equivalently, if $(G \cdot x)_{\rho < \varepsilon}$ is co-meagre for every $\varepsilon > 0$.
We say that $(G,\tau,\rho)$ has \emph{ample metric generics} if $G^n$ admits a metrically generic point for each $n$.

\begin{prp}\label{prp:criterion}
  Let $(G,\tau_G,\rho_G)$ and $(H,\tau_H,\rho_H)$ be two Polish topometric groups and $\varphi \colon H \to G$ a group homomorphism such that:
  \begin{itemize}
  \item $\varphi \colon (H,\tau_H) \to (G,\tau_G)$  and $\varphi \colon (H,\rho_H) \to (G,\rho_G)$ are continuous.
%  , is uniformly so (following \autoref{conv:SpaceTopologyMetric}, we may say that $\varphi \colon (H,\tau_H,\rho_H) \to (G,\tau_G,\rho_G)$ is continuous \emph{and} uniformly continuous).
  \item For any open $U$ in $H$ and any $\varepsilon >0$, $(\varphi(U))_{\rho_G < \varepsilon}$ is open in $G$ (i.e.~ $\varphi$ is \emph{topometrically open} in the sense of \cite{BenYaacov-Melleray:Grey}).
  \item $\varphi$ has dense image .
  \end{itemize}
  Assume further that $(H,\tau_H,\rho_H)$ has ample metric generics.

  Then $(G,\tau_G,\rho_G)$ has ample metric generics, and images of metrically generic elements of $H^n$ are metrically generic elements of $G^n$.
\end{prp}
\begin{proof}
  Since $H$ has ample generics, each action $H \curvearrowright H^n$ is topologically transitive.
  Since $\varphi$ has dense image, each action $G \curvearrowright G^n$ is topologically transitive as well.

  Assume that $x \in H^n$ is a metric generic.
  Let $U$ be an open neighbourhood of $1$ in $G$ and $\varepsilon > 0$.
  Let also $y = \varphi(x) \in G^n$.

  Find an open neighbourhood $V$ of $1$ in $H$ and $\delta > 0$ such that $\varphi\bigl( (V \cdot x)_{\rho_H < \delta} \bigr) \subseteq (U \cdot y)_{\rho_G < \varepsilon}$.
  Using \autoref{lem:AdequateDistance} and the continuity of $\varphi$, we have
  \begin{gather*}
    \overline{(U \cdot y)_{\rho_G < 2 \varepsilon}}
    \supseteq \overline{ \Bigl( \varphi\bigl( (V \cdot x)_{\rho_H < \delta} \bigr) \Bigr)_{\rho_G < \varepsilon}}
    \supseteq \Bigl( \overline{ \varphi\bigl( (V \cdot x)_{\rho_H < \delta} \bigr) } \Bigr)_{\rho_G < \varepsilon}
    \supseteq \Bigl( \varphi\bigl( \overline{ (V \cdot x)_{\rho_H < \delta} } \bigr) \Bigr)_{\rho_G < \varepsilon}.
  \end{gather*}
  By the characterization of metric generics we know that $\overline{(V \cdot x)_{\rho_H < \delta}}$ contains some non-empty open $W \subseteq H^n$.
  It follows that $\overline{(U \cdot y)_{\rho_G < 2 \varepsilon}}$ contains $(\varphi(W))_{\rho_G < \varepsilon}$, and we are done since $\varphi$ is topometrically open.
\end{proof}

This in particular recovers the sufficient condition for ample generics in Polish topometric groups given in \cite{BenYaacov-Berenstein-Melleray:TopometricGroups} (there $H$ is endowed with the discrete metric, $H$ is a subgroup of $G$ and $\varphi$ is the identity on $H$). We note, however, that Proposition \ref{prp:criterion} could also be obtained via results obtained in \cite{BenYaacov-Berenstein-Melleray:TopometricGroups} or \cite{BenYaacov-Melleray:Grey}. The reason why, at the moment, we cannot present a more convincing application of our topometric Effros theorem is that we lack examples of adequate topometric spaces, and the applicability of our result is still quite limited in practice, though we hope this will change.

We conclude this paper by discussing a potential source of interesting examples.

Fix a homogeneous metric structure $\cM$ and a countable group $\Gamma$ (say, generated by some finite set $S$ for ease of exposition), and consider the space $A(\Gamma,\cM)$ of all actions of $\Gamma$ on $\cM$.
One can see $A(\Gamma,\cM)$ as a closed subspace of $\Aut(\cM)^\Gamma$, so the induced topology $\tau$ turns $A(\Gamma,\cM)$ into a Polish space.
The group $\Aut(\cM)$ and the space $A(\Gamma,\cM)$ also carry natural metrics, namely, for $g,h \in G$ and $\alpha,\beta \in A(\Gamma,\cM)$:
\begin{gather*}
  d_u(g,h)=\sup \bigl\{ d\bigl( g(x),h(x) \bigr) : x \in M \bigr\},
  \quad
  \rho(\alpha,\beta)= \sup \Bigl\{ d_u\bigl(\alpha(s),\beta(s) \bigr) : s \in S \Bigr\}.
\end{gather*}

Note that $(A(\Gamma,\cM),\tau,\rho)$ is a Polish topometric space; and $\Aut(\cM)$ acts by conjugation on $A(\Gamma,\cM)$, via $(g \cdot \alpha) (\gamma)= g \alpha(\gamma) g^{-1}$.
In some concrete settings (e.g., in ergodic theory, see \cite{Berenstein-Ibarlucia-Henson:ECMPAFG}) one would like to know if there exists a privileged action of $\Gamma$ on $\cM$.
In many cases, the orbits for this $\Aut(\cM)$-action are meagre, and the next-best thing would be the existence (and description) of a metric generic.
Thus it would be interesting to apply our criterion in this setting, i.e., to know that $A(\Gamma,\cM)$ is adequate.

Note that when $\Gamma$ is the free group generated by $S$, $A(\Gamma,\cM)$ is isomorphic, as a topometric space, to $\Aut(\cM)^n$, hence is adequate.

\begin{qst}
  Let $\cM$ be the Urysohn metric space, and $\Gamma$ a countable group.
  Is the space of actions $A(\Gamma,\cM)$ an adequate Polish topometric space?
\end{qst}

Of course one could replace $\cM$ by any other homogeneous metric structure and ask the same question (in particular, one could consider the standard atomless measure algebra, or the separable Hilbert space).

\providecommand{\bysame}{\leavevmode\hbox to3em{\hrulefill}\thinspace}

\end{document}